\documentclass[11pt]{article}
\usepackage[UKenglish]{isodate}
\usepackage[T1]{fontenc}
\usepackage[noBBpl]{mathpazo}

\usepackage{amsmath, amssymb, amsfonts, amsthm, stmaryrd}

\usepackage[a4paper,left=2.48cm, right=2.48cm, top=2.35cm, bottom=2.35cm]{geometry}
\setlength{\parskip}{0.3\baselineskip}
\setlength{\parindent}{14pt}
\usepackage{microtype}
\frenchspacing

\usepackage{titlesec}
\titlespacing*{\section}{0pt}{\baselineskip}{0pt}
\titlespacing*{\subsection}{0pt}{0.5\baselineskip}{0pt}

\usepackage[shortlabels]{enumitem}
\setlist{leftmargin=0.8cm,topsep=0pt,itemsep=-2pt}
\setlist[enumerate]{label=\rm{(\roman*)}}

\makeatletter
\g@addto@macro\normalsize{%
  \setlength\abovedisplayskip{0.4\baselineskip plus 0.4\baselineskip}
  \setlength\belowdisplayskip{0.4\baselineskip plus 0.4\baselineskip}
  \setlength\abovedisplayshortskip{-0.3\baselineskip}
  \setlength\belowdisplayshortskip{0.4\baselineskip plus 0.4\baselineskip}
}
\makeatother

\renewenvironment{thebibliography}[1]
{ \begin{oldthebibliography}{#1}
  \setlength{\parskip}{0pt}
  \setlength{\itemsep}{2pt plus 0.3ex}
  \bgroup\footnotesize }
{ \egroup \end{oldthebibliography} }

\newtheoremstyle{shdefinition}{\topsep}{0.4\topsep}{}{}{\bfseries}{.}{0.5em}{} 
\newtheoremstyle{shplain}{\topsep}{0.4\topsep}{\itshape}{}{\bfseries}{.}{0.5em}{} 
\theoremstyle{shplain}
\newtheorem{thm}{Theorem}
\newtheorem{theorem}{Theorem}[section]
\newtheorem{lemma}[theorem]{Lemma}
\newtheorem{corollary}[theorem]{Corollary}
\theoremstyle{shdefinition}
\newtheorem{rem}{Remark}
\newtheorem{example}[theorem]{Example}
\newtheorem*{definition*}{Definition}

\newcommand{\<}{\langle}
\renewcommand{\>}{\rangle}
\renewcommand{\leq}{\leqslant}
\renewcommand{\geq}{\geqslant}
\newcommand{\leqn}{\trianglelefteqslant}
\newcommand{\GL}{\mathrm{GL}}
\newcommand{\Cyc}{\mathrm{Cyc}}

\begin{document}

\begin{center} 
{\LARGE \textbf{Flexibility in generating sets of finite groups}} \\[11pt]
{\Large Scott Harper}                                             \\[22pt]
\end{center}

\begin{center}
\begin{minipage}{0.8\textwidth}
\small Let $G$ be a finite group. It has recently been proved that every nontrivial element of $G$ is contained in a generating set of minimal size if and only if all proper quotients of $G$ require fewer generators than $G$. It is natural to ask which finite groups, in addition, have the property that any two elements of $G$ that do not generate a cyclic group can be extended to a generating set of minimal size. This note answers the question. The only such finite groups are very specific affine groups: elementary abelian groups extended by a cyclic group acting as scalars. \par
\end{minipage}
\end{center}

\section{Introduction} \label{s:intro}

Generating sets for finite groups have attracted the attention of many authors for decades. In recent years major developments have been made drawing on our extensive knowledge of the generating sets of finite simple groups.  One natural question on this topic is: for which finite groups $G$ is every element contained in a generating set of minimal size $d(G)$? Completing a long line of research, this question was recently proved to admit a very simple answer: the finite groups $G$ such that $d(G/N) < d(G)$ for all $1 \neq N \leqn G$. Acciarri and Lucchini \cite{ref:AcciarriLucchini20} proved the case with $d(G) \geq 3$  and Burness, Guralnick and Harper \cite{ref:BurnessGuralnickHarper21} completed the theorem by proving an influential conjecture of Breuer, Guralnick and Kantor \cite{ref:BreuerGuralnickKantor08} regarding the case with $d(G) \leq 2$.

The most straightforward example with this generation property is the elementary abelian group $p^r$. Considered as a vector space, we see that not only is every nonzero vector contained in a generating set of minimal size (a basis), but moreover any collection of linearly independent vectors can be extended to such a generating set. This motivates the following definition. 

\begin{definition*}
Let $G$ be a finite group. Let $d(G)$ be the minimal size of a generating set for $G$. For an integer $1 \leq k \leq d(G)$, we say that $G$ is \emph{$k$-flexible} if for any $x_1, \dots, x_k \in G$ such that $d(\<x_1,\dots,x_k\>)=k$ there exists $x_{k+1}, \dots, x_{d(G)} \in G$ such that $\<x_1, \dots, x_{d(G)}\>=G$. 
\end{definition*}

We can now rephrase the result of Acciarri--Lucchini \cite{ref:AcciarriLucchini20} and Burness--Guralnick--Harper. \cite{ref:BurnessGuralnickHarper21}.

\begin{thm} \label{thm:1-flexible}
A finite group $G$ is $1$-flexible if and only if $d(G/N) < d(G)$ for all $1 \neq N \leqn G$.
\end{thm}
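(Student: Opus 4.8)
The plan is to recognise that $1$-flexibility is nothing more than a reformulation of the generation property recalled in Section~\ref{s:intro}, so that the theorem follows at once from the cited work. First I would unwind the definition of $k$-flexibility in the case $k=1$. The hypothesis there is that $d(\<x_1\>)=1$; since $\<x_1\>$ is cyclic we always have $d(\<x_1\>)\leq 1$, with equality precisely when $x_1\neq 1$. Hence $G$ is $1$-flexible if and only if every nontrivial element $x_1$ can be completed to a generating tuple $x_1,\dots,x_{d(G)}$ of size $d(G)$; that is, if and only if every nontrivial element of $G$ lies in a generating set of minimal size.

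This is exactly the property whose classification is described in the introduction. Therefore I would simply invoke the theorem of Acciarri--Lucchini \cite{ref:AcciarriLucchini20} in the range $d(G)\geq 3$ together with that of Burness--Guralnick--Harper \cite{ref:BurnessGuralnickHarper21} in the range $d(G)\leq 2$. Jointly these assert that a finite group has the property above if and only if $d(G/N)<d(G)$ for every $1\neq N\leqn G$, and the two ranges of $d(G)$ exhaust all finite groups, yielding the stated equivalence.

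Since there is genuinely no new mathematical content beyond this translation, the only points needing care are bookkeeping. I would confirm that the quantifier ``$d(\<x_1\>)=1$'' in the definition is equivalent to ``$x_1\neq 1$'', and I would check the small cases absorbed into the $d(G)\leq 2$ regime for consistency. In particular, when $d(G)=1$ (so $G$ is cyclic) the completion range $x_2,\dots,x_{d(G)}$ is empty, so $1$-flexibility reduces to the condition that every nontrivial element generates $G$; I would verify that this matches the right-hand side, which forces the only nontrivial normal subgroup to be $G$ itself and hence $G$ to have prime order. The main ``obstacle'' is thus purely one of interpretation: once the equivalence ``$d(\<x_1\>)=1 \Leftrightarrow x_1\neq1$'' and the edge cases $d(G)\in\{0,1\}$ are checked, the equivalence is immediate.
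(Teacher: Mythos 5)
Your proposal is correct and takes essentially the same approach as the paper: the paper offers no separate proof of Theorem~\ref{thm:1-flexible}, presenting it exactly as you do, namely as a direct rephrasing of the results of Acciarri--Lucchini \cite{ref:AcciarriLucchini20} (for $d(G)\geq 3$) and Burness--Guralnick--Harper \cite{ref:BurnessGuralnickHarper21} (for $d(G)\leq 2$). Your bookkeeping step, that $d(\langle x_1\rangle)=1$ if and only if $x_1\neq 1$, is precisely the translation the paper performs implicitly when it calls the theorem a rephrasing of the property that every nontrivial element lies in a generating set of minimal size.
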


\begin{rem} \label{rem:1-flexible}
Let us describe the groups in Theorem~\ref{thm:1-flexible}. (We use \textsc{Atlas} notation, so $p$ denotes both a prime and the cyclic group of that order.)
\begin{itemize}
\item[{\rm (i)}]   If $d(G)=1$, then $G = p$ with $p$ prime.
\item[{\rm (ii)}]  If $d(G)=2$, then, by Lemma~2.1 in \cite{ref:BurnessGuralnickHarper21} and the discussion that follows, $G$ is one of:
\begin{itemize}
\item[{\rm (a)}] $p^2$ for $p$ prime
\item[{\rm (b)}] $p^r{:}\<g\>$ for $p$ prime and $g \in \GL_r(p)$ irreducible
\item[{\rm (c)}] $T^r.\<(g,1, \dots, 1)\sigma\>$ for $T$ nonabelian simple, $g \in \mathrm{Aut}(T)$ and an $r$-cycle $\sigma \in S_r$ .
\end{itemize}
\item[{\rm (iii)}] If $d(G) \geq 3$, then, by \cite[Theorem~1.4]{ref:DallaVoltaLucchini98}, $G = \{ (g_1,\dots,g_r) \in L^r \mid Mg_1 = \dots = Mg_r \}$ for $L$ a primitive monolithic group with monolith $M$ and $r = f(d(G)-1)$ where $f$ is the function given in \cite[Theorem~2.7]{ref:DallaVoltaLucchini98}.
\end{itemize}
\end{rem}

This paper shows that startlingly few $1$-flexible groups are also $2$-flexible.

\begin{thm} \label{thm:2-flexible}
Let $G$ be a finite group with $d(G) \geq 3$. Then the following are equivalent:
\begin{itemize}
\item[{\rm (i)}]   $G$ is $1$-flexible and $2$-flexible
\item[{\rm (ii)}]  $G$ is $k$-flexible for all $1 \leq k < d(G)$
\item[{\rm (iii)}] $G = p^r{:}\<g\>$ for a prime $p$, a scalar $g \in \GL_r(p)$ and $r = d(G) - d(\<g\>)$.
\end{itemize}
\end{thm}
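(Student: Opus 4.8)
The plan is to prove the cycle (i) $\Rightarrow$ (iii) $\Rightarrow$ (ii) $\Rightarrow$ (i). The last implication is immediate: since $d(G) \geq 3$, both $k=1$ and $k=2$ satisfy $1 \leq k < d(G)$, so (ii) subsumes (i). For (i) $\Rightarrow$ (iii) I would first feed $1$-flexibility into Theorem~\ref{thm:1-flexible} and Remark~\ref{rem:1-flexible}(iii): as $d(G) \geq 3$, the group is a crown-based power $G = L_r$ for a primitive monolithic group $L$ with monolith $M$ and $r = f(d(G)-1)$. Writing $M_1$ for the copy of $M$ in the first coordinate, one checks that $M_1 \trianglelefteq G$ is a minimal normal subgroup and that $G/M_1 \cong L_{r-1}$. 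The whole argument then rests on using $2$-flexibility to force $M$ to be abelian of rank $1$.

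The key obstruction is the following. Since $M_1$ is minimal normal it is generated as a normal subgroup by a single element, whence $d(G) = d(L_r) \leq d(L_{r-1}) + 1$, that is, $d(G/M_1) \geq d(G) - 1$. Now suppose $x_1, x_2 \in M_1$ satisfy $d(\<x_1,x_2\>) = 2$. If $G$ were $2$-flexible we could find $x_3, \dots, x_{d(G)}$ with $\<x_1, \dots, x_{d(G)}\> = G$; reducing modulo $M_1$ gives $\bar x_1 = \bar x_2 = 1$, so $\bar x_3, \dots, \bar x_{d(G)}$ would generate $G/M_1$ using only $d(G) - 2$ elements, contradicting $d(G/M_1) \geq d(G)-1$. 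Thus $2$-flexibility forbids two ``independent'' elements inside one copy of the monolith. If $M = T^k$ is nonabelian I would take $x_1, x_2$ in the first simple direct factor generating a copy of $T$, so $d(\<x_1,x_2\>) = d(T) = 2$; if $M \cong \mathbb{F}_p^s$ is abelian with $s \geq 2$ I would take $x_1, x_2$ linearly independent in $M_1$, again with $d(\<x_1,x_2\>) = 2$. Both configurations violate $2$-flexibility, forcing $M \cong \mathbb{F}_p$. Then $L = M{:}H$ with $H \leq \GL_1(p) = \mathbb{F}_p^\times$ cyclic, and (as $|H|$ is coprime to $p$) $G = M^r{:}H$ with $H = \<g\>$ acting as a scalar on $\mathbb{F}_p^r$. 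This is exactly the group in (iii), and $d(G) = r + d(\<g\>)$ yields $r = d(G) - d(\<g\>)$.

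For (iii) $\Rightarrow$ (ii), the computational direction, let $G = \mathbb{F}_p^r{:}\<g\> = V{:}C$ with $C$ acting as a scalar. The decisive feature is that $|C|$ divides $p-1$, so \emph{every} $\mathbb{F}_p$-subspace of $V$ is a $C$-submodule; a tuple then generates $G$ precisely when its images generate $C$ and an associated family of ``relative'' vectors spans $V$, and in particular $d(G) = r+1$ when $g \neq 1$. Given $x_1, \dots, x_k$ with $d(\<x_1,\dots,x_k\>) = k$, I would split according to whether the images of the $x_i$ already generate $C$. If they do, the relevant vectors span a $k$-dimensional subspace, which I complete to a basis of $V$ and adjoin one element for $C$. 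If they do not, I first adjoin a single element restoring full generation of $C$ (recovering the lost module direction through a commutator), then complete the remaining vectors to a basis. In each case the budget of $r+1-k$ additional elements suffices, since a linearly independent set always extends to a basis.

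The main obstacle is making the obstruction of the second paragraph fully rigorous and uniform across the two cases. The crucial inequality $d(L_r) \leq d(L_{r-1}) + 1$ must be justified for nonabelian as well as abelian monoliths; it follows from the general fact that passing to a quotient by a minimal normal subgroup lowers the generator number by at most one, or alternatively from the explicit Dalla Volta--Lucchini function $f$, which is monotone with unit steps. I expect the remaining care to lie in the bookkeeping for (iii) $\Rightarrow$ (ii) when the given elements generate a proper subgroup of $C$, where one must verify that a single corrective generator simultaneously repairs generation of $C$ and, via commutators, supplies the module direction needed to keep the total count at $d(G)$.
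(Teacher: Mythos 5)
Your implications (ii) $\Rightarrow$ (i) and (i) $\Rightarrow$ (iii) are sound and essentially retrace the paper's own route: Lemmas~\ref{lem:2-flexible} and~\ref{lem:12-flexible} likewise combine the Dalla Volta--Lucchini structure theorem with the observation that a pair $x_1,x_2$ inside a minimal normal subgroup $N$ with $d(\<x_1,x_2\>)=2$ would, after applying $2$-flexibility and passing to $G/N$, force $d(G/N) \leq d(G)-2$. Your explicit witness pairs (generators of a simple factor, resp.\ two independent vectors) replace the paper's appeal to \cite[Lemma~32]{ref:AACNS17}, and bypassing the cycliciser is legitimate here because under (i) Lemma~\ref{lem:1-flexible} forces $\Cyc(G)=1$. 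One caveat: the inequality $d(G) \leq d(G/M_1)+1$ is precisely the main theorem of \cite{ref:Lucchini95}, which is exactly how the paper closes this step, and neither of your proposed justifications actually proves it. The normal-closure argument is unsound ($A_5$ is the normal closure of any nontrivial element, yet $d(A_5) = d(A_5/A_5)+2$), and the Dalla Volta--Lucchini function $f$ is far from having ``unit steps'' for nonabelian monoliths; the fact you need is a genuine theorem whose only exceptions are nonabelian simple groups, harmless here since $d(G) \geq 3$.

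The genuine gap is in (iii) $\Rightarrow$ (ii): the key claim in your first case is false. Take $G = p^2{:}\<g\>$ with $g$ a nontrivial scalar, so $d(G)=3$, fix $1 \neq v \in p^2$, and set $x_1 = vg$, $x_2 = v^2g$. Then $x_1x_2^{-1} = (vg)(g^{-1}v^{-2}) = v^{-1}$, so $\<x_1,x_2\> = \<v\>{:}\<g\>$ and $d(\<x_1,x_2\>) = 2$. The images of $x_1,x_2$ generate $C=\<g\>$, yet the associated vectors $v, v^2$ span a line, not a $2$-dimensional space; completing them to a basis is impossible, and repairing the span would cost $r-1$ vectors plus a generator of $C$, i.e.\ $r$ elements against a budget of $d(G)-k = r-1$. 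The invariant that makes the count work is not the span of the vector parts but $W = \<x_1,\dots,x_k\> \cap p^r$ together with the image $D \leq C$: whenever $D \neq 1$ one has $d(\<x_1,\dots,x_k\>) = \dim W + 1$, so $\dim W = k-1$ and all $r+1-k$ remaining elements must be spent completing $W$ to $p^r$; when moreover $1 \neq D \neq C$, one of those elements must simultaneously restore generation of $C$, and that is exactly where your commutator remark belongs --- conjugating the double-duty element by a nontrivial element of $D$ recovers the missing vector --- but you attach it to the wrong case. The paper's Example~\ref{ex:flexible} organises this differently: it first replaces $\<g\>$ by a conjugate $H$ meeting $\{x_1,\dots,x_k\}$ trivially (possible since the $p^r$ conjugates of $\<g\>$ pairwise intersect trivially and $k \leq r < p^r$) and then does linear algebra relative to $H$; note that even there the choice of $H$ matters, since in my example $H = \<g\>$ itself avoids $x_1,x_2$ while the resulting vector parts $v,v^2$ are still dependent, so one must additionally choose $H$ to make them independent. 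Until this case analysis (or a correctly chosen complement) is carried out, the computational direction remains unproved.
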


While the class of $1$-flexible groups is a large and rich class of soluble and insoluble groups, including all finite simple groups, adding the assumption of $2$-flexibility restricts to a narrow class of supersoluble groups and guarantees $k$-flexibility for all $1 \leq k < d(G)$. 

\begin{rem} \label{rem:2-flexible}
In Theorem~\ref{thm:2-flexible} we assume that $d(G) \geq 3$. For $d(G) = 1$, $2$-flexibility is not interesting. For $d(G) = 2$, we will see in Lemma~\ref{lem:2-flexible_2} that $G$ is $1$-flexible and $2$-flexible if and only if $G = p{:}\<g\>$ where $p$ is prime and $g \in \GL_r(p)$ is a scalar of \emph{prime order}.
\end{rem}

\begin{rem} \label{rem:12-flexible}
Note that $2$-flexibility does not imply $1$-flexibility. For example, if $G$ is the quaternion group $Q_8$, then every proper subgroup of $G$ is cyclic, so $G$ is $2$-flexible, but the nontrivial central element of $G$ is not contained in any generating pair for $G$, so $G$ is not $1$-flexible. However, observe that $Q_8$ is a cyclic central extension of $2^2$, which is $1$-flexible. Indeed, by Corollary~\ref{cor:1-flexible}, every $2$-flexible group is a cyclic central extension of a $1$-flexible group.
\end{rem}

\textbf{\emph{Acknowledgements.}}\ The author thanks the anonymous referee for their useful feedback.

\section{Proofs} \label{s:proofs}

For this section, let $G$ be a finite group such that $d(G) \geq 2$. We begin with some preliminaries.

\begin{lemma} \label{lem:quotient}
Let $1 \leq k \leq d(G)$. Assume that $G$ is $k$-flexible. Let $N \leqn G$ such that $d(G/N) = d(G)$. Then $G/N$ is $k$-flexible.
\end{lemma}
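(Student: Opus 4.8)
The plan is to prove this by a straightforward lifting argument. Write $d = d(G)$ and let $\pi \colon G \to G/N$ be the natural projection, so that $d(G/N) = d$ by hypothesis. To verify that $G/N$ is $k$-flexible, I would take arbitrary $\bar{y}_1, \dots, \bar{y}_k \in G/N$ with $d(\langle \bar{y}_1, \dots, \bar{y}_k \rangle) = k$, and the task is to produce $\bar{y}_{k+1}, \dots, \bar{y}_d \in G/N$ with $\langle \bar{y}_1, \dots, \bar{y}_d \rangle = G/N$.

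First I would lift the given elements: choose any preimages $x_i \in \pi^{-1}(\bar{y}_i)$ for $1 \leq i \leq k$ and set $H = \langle x_1, \dots, x_k \rangle$. The key point is that $d(H) = k$ regardless of how the lifts are chosen. Indeed, $H$ is generated by $k$ elements, so $d(H) \leq k$; and $\pi(H) = \langle \bar{y}_1, \dots, \bar{y}_k \rangle$ is a quotient of $H$ with $d(\pi(H)) = k$, while the minimal number of generators of a quotient never exceeds that of the group, so $d(H) \geq k$. Hence $d(H) = k$.

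Since $d(H) = k$, the $k$-flexibility of $G$ now applies to $x_1, \dots, x_k$ and yields $x_{k+1}, \dots, x_d \in G$ with $\langle x_1, \dots, x_d \rangle = G$. Applying $\pi$ and using $\pi(\langle S \rangle) = \langle \pi(S) \rangle$ gives $\langle \bar{y}_1, \dots, \bar{y}_k, \pi(x_{k+1}), \dots, \pi(x_d) \rangle = \pi(G) = G/N$, so setting $\bar{y}_i = \pi(x_i)$ for $k < i \leq d$ completes the desired extension.

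The one point that requires care, and the only place the hypothesis $d(G/N) = d(G)$ is used, is the bookkeeping on the number of elements. The definition of $k$-flexibility for $G/N$ demands an extension to a generating set of \emph{minimal} size $d(G/N)$, and the flexibility of $G$ supplies exactly $d(G) - k$ new elements; the resulting generating set of $G/N$ therefore has size $d(G) = d(G/N)$, which is genuinely minimal precisely because the two ranks coincide. Were $d(G/N) < d(G)$, the construction would overshoot and fail to certify $k$-flexibility, so this equality is essential. Beyond tracking this count there is no serious obstacle, as both the lifting and the projection steps are elementary.
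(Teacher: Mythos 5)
Your proposal is correct and follows essentially the same route as the paper's proof: lift the elements, observe that the lifted subgroup still requires exactly $k$ generators (the paper asserts this without the justification you supply via the quotient inequality $d(\pi(H)) \leq d(H)$), apply $k$-flexibility in $G$, and project the resulting generating set back down. Your explicit remarks on why $d(H)=k$ and on where the hypothesis $d(G/N)=d(G)$ enters are correct elaborations of details the paper leaves implicit.
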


\begin{proof}
Write $d = d(G) = d(G/N)$. Let $Nx_1, \dots, Nx_k \in G/N$ and assume $d(\<Nx_1, \dots, Nx_k\>) = k$. Then $d(\<x_1,\dots,x_k\>) = k$, so, as $G$ is $k$-flexible, there exist elements $x_{k+1}, \dots, x_d \in G$ such that $\<x_1, \dots, x_d \> = G$. Therefore, we deduce $\<Nx_1, \dots, Nx_d \> = G$, so $G/N$ is $k$-flexible.
\end{proof}

To obtain a partial converse to Lemma~\ref{lem:quotient}, in Lemma~\ref{lem:cycliciser_quotient}, we recall that the \emph{cycliciser} of $G$ is 
\[
\Cyc(G) = \{ c \in G \mid \text{$\<c,g\>$ is cyclic for all $g \in G$} \}.
\] 
The fact that the cycliciser $\Cyc(G)$ is a (cyclic) subgroup of $G$ is a consequence of \cite[Lemma~32]{ref:AACNS17}, which states that if $x,y,z \in G$ are such that each of $\<x,y\>$, $\<x,z\>$, $\<y,z\>$ are cyclic, then $\<x,y,z\>$ is also cyclic. Lemma~\ref{lem:cycliciser} characterises $\Cyc(G)$.

\begin{lemma} \label{lem:cycliciser}
The cycliciser $\Cyc(G)$ is the smallest normal subgroup $N \leqn G$ such that $\Cyc(G/N)$ is trivial.
\end{lemma}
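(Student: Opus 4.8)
The plan is to split the statement into three parts: first that $\Cyc(G)$ is normal (so that the phrase ``smallest normal subgroup'' is even meaningful), then that $\Cyc(G)$ is itself one of the subgroups $N \leqn G$ with $\Cyc(G/N)$ trivial, and finally that $\Cyc(G) \leq N$ for every such $N$. The last two points together say exactly that $\Cyc(G)$ is the smallest such subgroup, so establishing all three completes the proof.

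For normality, I would observe that the defining condition is preserved by automorphisms: if $\phi \in \mathrm{Aut}(G)$ and $c \in \Cyc(G)$, then for any $g \in G$ we have $\<\phi(c),g\> = \phi(\<c,\phi^{-1}(g)\>)$, which is cyclic, so $\phi(c) \in \Cyc(G)$. Hence $\Cyc(G)$ is characteristic, in particular normal. For the containment, let $N \leqn G$ satisfy $\Cyc(G/N) = 1$ and take $c \in \Cyc(G)$. For every $Ng \in G/N$ the subgroup $\<Nc, Ng\>$ is the image of the cyclic group $\<c,g\>$, hence cyclic, so $Nc \in \Cyc(G/N) = 1$ and therefore $c \in N$. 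This gives $\Cyc(G) \leq N$ with essentially no work.

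The substance of the lemma is the remaining claim that $\Cyc(G/C) = 1$, where I write $C = \Cyc(G)$. Suppose $Cx \in \Cyc(G/C)$; I want to show $x \in C$, that is, that $\<x,g\>$ is cyclic for every $g \in G$. Fix $g$ and set $A = \<x,g\>$. By hypothesis $\<Cx,Cg\> = AC/C \cong A/(A \cap C)$ is cyclic, so choosing $t \in A$ whose image generates $A/(A \cap C)$ gives $A = \<t\>(A \cap C)$. Crucially, $C$ is cyclic (this is the input from \cite[Lemma~32]{ref:AACNS17} recalled above), so its subgroup $A \cap C$ is cyclic; let $s$ be a generator. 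Then $A = \<s,t\>$, and since $s \in A \cap C \subseteq \Cyc(G)$, the defining property of the cycliciser forces $\<s,t\>$ to be cyclic. Hence $A = \<x,g\>$ is cyclic, and letting $g$ range over $G$ shows $x \in C$.

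I expect the main obstacle to be the temptation to lift cyclicity from $G/C$ to $G$ directly: knowing only that $A/(A\cap C)$ and $A \cap C$ are both cyclic is not enough to conclude that $A$ is cyclic, since a cyclic-by-cyclic abelian group can have rank two. The device that sidesteps this is to present $A$ with two generators $s,t$ where $s$ lies in the cycliciser, and then apply the definition of $\Cyc(G)$ to the pair $(s,t)$; this works precisely because $C = \Cyc(G)$ is cyclic, so that $A \cap C$ is generated by a single cycliciser element $s$. Thus the cyclicity of $\Cyc(G)$ is not merely cosmetic but the key structural fact powering the argument.
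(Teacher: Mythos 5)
Your proof is correct and takes essentially the same route as the paper: the minimality containment is handled identically, and your key step---lifting a generator $t$ of the cyclic image of $A = \<x,g\>$ and pairing it with a generator $s$ of $A \cap C$, then applying the defining property of $\Cyc(G)$ to $\<s,t\>$---is the same mechanism the paper uses when it writes $\<N,g,h\> = \<N,k\>$ for a lift $k$ of a generator of the cyclic image and concludes via \cite[Lemma~32]{ref:AACNS17} that this group is cyclic. The differences are cosmetic: the paper adjoins all of $N$ to $\<g,h\>$ rather than intersecting with it (so it cites Lemma~32 directly instead of leaning on the cyclicity of $\Cyc(G)$, which itself comes from that lemma), and it leaves the normality of $\Cyc(G)$ implicit where you verify it explicitly.
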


\begin{proof}
Let $N = \Cyc(G)$. Let $g \in G$ such that $Ng \in \Cyc(G/N)$. Let $h \in G$ be arbitrary. Then $\<Ng, Nh\>$ is cyclic, so there exists $k \in G$ with $\<Ng,Nh\> = \<Nk\>$. Therefore, $\<N,g,h\> = \<N,k\>$. Now \cite[Lemma~32]{ref:AACNS17} implies that $\<N,k\>$ is cyclic, so $\<N,g,h\>$ is cyclic and thus $\<g,h\>$ is cyclic. This means that $g \in N$, which proves that $\Cyc(G/N)$ is trivial.

Let $N \leqn G$ with $\Cyc(G/N) = 1$. Let $x \in \Cyc(G)$. For all $g \in G$, $\<Nx,Ng\>$ is cyclic as $\<x,g\>$ is cyclic. Hence, $Nx \in \Cyc(G/N)$, so $x \in N$ as $\Cyc(G/N) = 1$. Therefore, $\Cyc(G) \leq N$.
\end{proof}

\begin{lemma} \label{lem:d_quotient}
Assume that $d(G) \geq 2$. Then $d(G/\Cyc(G)) = d(G)$.
\end{lemma}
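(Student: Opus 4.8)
The plan is to establish the two inequalities $d(G/\Cyc(G)) \leq d(G)$ and $d(G) \leq d(G/\Cyc(G))$ separately. The first is immediate, since the minimal number of generators cannot increase on passing to a quotient. For the reverse inequality I would use the defining property of the cycliciser to compress a lifted generating set of the quotient into a generating set of $G$ of the same size.

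Write $C = \Cyc(G)$ and recall from the discussion above that $C$ is a cyclic (characteristic, hence normal) subgroup; fix a generator $c$ of $C$. Put $m = d(G/C)$ and choose $x_1, \dots, x_m \in G$ whose images generate $G/C$. Since $C = \<c\>$ is the kernel of the quotient map $G \to G/C$, the subgroup $\<x_1, \dots, x_m, c\>$ contains $C$ and maps onto $G/C$, and hence $\<x_1, \dots, x_m, c\> = G$.

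The crux is a single substitution. Because $c \in \Cyc(G)$, the subgroup $\<c, x_1\>$ is cyclic, say $\<c, x_1\> = \<t\>$. Then $c$ and $x_1$ both lie in $\<t\>$, so $\<t, x_2, \dots, x_m\>$ contains $c, x_1, x_2, \dots, x_m$ and therefore equals $\<x_1, \dots, x_m, c\> = G$. This writes $G$ as generated by the $m$ elements $t, x_2, \dots, x_m$, giving $d(G) \leq m = d(G/C)$; combined with the first inequality, this yields $d(G/\Cyc(G)) = d(G)$.

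The only point at which the hypothesis $d(G) \geq 2$ enters---and the step I would be most careful about---is guaranteeing that there is an element $x_1$ available to merge with $c$, that is, that $m \geq 1$. If $m = 0$ then $G/C$ is trivial, so $G = C$ is cyclic and $d(G) = 1$, contrary to assumption; hence $m \geq 1$ and the substitution is legitimate. This is exactly where the argument should, and does, break down for cyclic groups, for which $\Cyc(G) = G$ and the conclusion fails. Beyond this bookkeeping I expect no genuine obstacle: the substitution is forced by the definition of the cycliciser together with the cyclicity of $\Cyc(G)$ recalled above, and no appeal to centrality or to the deeper structural results quoted in the introduction is needed.
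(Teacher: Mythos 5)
Your proof is correct and matches the paper's argument essentially step for step: both lift a minimal generating set of $G/\Cyc(G)$, adjoin a generator of the cyclic subgroup $\Cyc(G)$, and then use the defining property of the cycliciser to merge it with one lifted generator into a single element, with the hypothesis $d(G)\geq 2$ used exactly as you use it, to guarantee the quotient is nontrivial so that such a lifted generator exists. No discrepancies to report.
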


\begin{proof}
Write $\Cyc(G) = N$ and write $G/N = \< Ng_1, \dots Ng_{d(G/N)} \>$. Then $G = \< N, g_1, \dots, g_{d(G/N)} \>$. As $d(G) \geq 2$, the quotient $G/N$ is nontrivial, so $d(G/N) \geq 1$. Since $\< N, g_1 \>$ is cyclic, fix $h \in G$ such that $\< N, g_1 \> = \< h \>$. Then $G = \< h, g_2, \dots, g_{d(G/N)} \>$, which gives us $d(G) \leq d(G/N)$. Clearly, $d(G) \geq d(G/N)$, so $d(G) = d(G/N)$.
\end{proof}

\begin{lemma} \label{lem:cycliciser_quotient}
Let $2 \leq k < d(G)$. Then $G$ is $k$-flexible if and only if $G/\Cyc(G)$ is $k$-flexible.
\end{lemma}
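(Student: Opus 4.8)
The plan is to prove the two implications separately, the forward one being routine and the reverse one being the crux. Throughout write $N = \Cyc(G)$. Since every $c \in N$ satisfies that $\<c,g\>$ is abelian for all $g \in G$, we have $N \leq Z(G)$; in particular $N \leqn G$, and recall $N$ is cyclic. For the forward implication, assume $G$ is $k$-flexible. By Lemma~\ref{lem:d_quotient} we have $d(G/N) = d(G)$, so Lemma~\ref{lem:quotient} immediately gives that $G/N$ is $k$-flexible.

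For the reverse implication, assume $G/N$ is $k$-flexible and fix $x_1, \dots, x_k \in G$ with $d(\<x_1,\dots,x_k\>) = k$; write $H = \<x_1,\dots,x_k\>$. First I would show that $d(HN/N) = k$. Indeed, for any $c \in H \cap N$ and $h \in H$ the group $\<c,h\>$ is cyclic (as $c \in \Cyc(G)$ and $h \in G$), so $H \cap N \leq \Cyc(H)$; hence $HN/N \cong H/(H \cap N)$ surjects onto $H/\Cyc(H)$, and since $d(H) = k \geq 2$, Lemma~\ref{lem:d_quotient} applied to $H$ gives $d(H/\Cyc(H)) = k$, forcing $d(HN/N) = k$. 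Now the $k$-flexibility of $G/N$ yields $w_{k+1}, \dots, w_d \in G$, where $d = d(G) = d(G/N)$, with $\<Nx_1, \dots, Nx_k, Nw_{k+1}, \dots, Nw_d\> = G/N$.

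The heart of the argument is to lift this to a generating set of $G$ while keeping $x_1, \dots, x_k$ fixed. For $\vec{c} = (c_{k+1}, \dots, c_d) \in N^{d-k}$ put $K_{\vec c} = \<x_1, \dots, x_k, w_{k+1}c_{k+1}, \dots, w_d c_d\>$. Reducing modulo $N$ shows $K_{\vec c} N = G$, and since $N$ is central we obtain $[G,G] = [K_{\vec c}N, K_{\vec c}N] = [K_{\vec c}, K_{\vec c}] \leq K_{\vec c}$. Consequently $K_{\vec c} = G$ if and only if the image of $K_{\vec c}$ in the abelianisation $A = G/[G,G]$ equals $A$. Thus it suffices to choose $\vec c$ so that $\bar x_1, \dots, \bar x_k, \overline{w_{k+1}c_{k+1}}, \dots, \overline{w_d c_d}$ generate $A$, given that they already generate $A$ modulo the image $\bar N$ of $N$ (because $\<Nx_1,\dots,Nw_d\>=G/N$ forces these images to generate $(G/N)^{\mathrm{ab}} = A/\bar N$).

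I would finish with a prime-by-prime argument in the finite abelian group $A$. The key structural input is that $\bar N \leq \Cyc(A)$: a generator $t$ of $N$ lies in $\Cyc(G)$, so $\<\bar t, a\>$ is cyclic for every $a \in A$. Decomposing $A = \bigoplus_p A_p$ into primary components and using that $\Cyc(A) = \prod_{A_p\ \text{cyclic}} A_p$, the component $\bar N_p$ vanishes whenever $A_p$ is noncyclic, so at such primes the given data already generates $A_p$. At a prime $p$ with $A_p$ cyclic, either some coordinate has $p$-part generating $A_p$ (and I leave that coordinate's adjustment trivial), or every coordinate has $p$-part in the maximal subgroup, in which case generating $A_p$ modulo $\bar N_p$ forces $\bar N_p = A_p$ and I adjust the single free coordinate $w_{k+1}$ by an element of $\bar N_p$ to make its $p$-part a generator. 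By the Chinese Remainder Theorem these per-prime choices are realised by one $\vec c \in N^{d-k}$, which only requires that a free coordinate exists, i.e.\ that $k < d$. This gives $K_{\vec c} = G$ and shows $G$ is $k$-flexible. The main obstacle is precisely this lifting step: a standard Gasch\"utz-type replacement would be allowed to modify every generator, whereas here $x_1,\dots,x_k$ must be preserved, and it is exactly the cyclicity of $N = \Cyc(G)$—equivalently $\bar N \leq \Cyc(A)$—together with $k < d$ that makes a single free coordinate sufficient.
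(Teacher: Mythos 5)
Your proof is correct, but the crux of it takes a genuinely different route from the paper. The two arguments coincide up to the point where you produce lifts $w_{k+1},\dots,w_d$ with $\<Nx_1,\dots,Nx_k,Nw_{k+1},\dots,Nw_d\> = G/N$ (the paper establishes $d(\<Nx_1,\dots,Nx_k\>)=k$ via exactly your observation that $H \cap N \leq \Cyc(H)$ together with Lemma~\ref{lem:d_quotient}). At that point the paper finishes in one line: since $N = \Cyc(G)$ is cyclic, the subgroup $\<N, x_{d(G)}\>$ is itself cyclic, so one replaces the last free generator $x_{d(G)}$ by a generator $y$ of $\<N, x_{d(G)}\>$; then $\<x_1,\dots,x_{d(G)-1},y\>$ contains both $N$ and $x_{d(G)}$, hence contains $\<N,x_1,\dots,x_{d(G)}\> = G$. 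In other words, the defining property of the cycliciser lets you absorb $N$ wholesale into a single generator, with no need to control how the remaining generators sit in $G$. You instead use only two consequences of $N=\Cyc(G)$ — that $N$ is central and that its image $\bar N$ in the abelianisation $A$ lies in $\Cyc(A)$ — and then run a Gasch\"utz-type correction: reduce to $A$ via $[G,G] \leq K_{\vec c}$, decompose into primary components, observe $\bar N_p = 1$ at noncyclic primes and $\bar N_p = A_p$ at the problematic cyclic primes, and fix everything by adjusting the single free coordinate via CRT. Both proofs hinge on the same pivot (one free coordinate, i.e.\ $k < d(G)$), but yours is considerably longer while isolating a somewhat more general principle (lifting with prescribed partial data across a central kernel that is ``locally cyclic where it matters''), whereas the paper's absorption trick is shorter, stays entirely at the group level, and uses the full strength of the cycliciser property directly.
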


\begin{proof}
Write $\Cyc(G) = N$. First assume that $G$ is $k$-flexible. Since $d(G) \geq 2$, Lemma~\ref{lem:d_quotient} gives $d(G/N) = d(G)$, so Lemma~\ref{lem:quotient} gives that $G/N$ is $k$-flexible. 

Now assume that $G/N$ is $k$-flexible. Let $x_1, \dots, x_k \in G$ such that $d(\<x_1,\dots,x_k\>)=k$ and write $H = \<x_1,\dots,x_k\>$. The image of $H$ in $G/N$ is isomorphic to $H/(H \cap N)$. Since $H \cap N \leq \Cyc(H)$ and $d(H) = k \geq 2$, Lemma~\ref{lem:d_quotient} implies that $d(\<Nx_1, \dots, Nx_k\>) = d(H/(H \cap N)) = d(H) = k$. Since $G/N$ is $k$-flexible, there exist elements $x_{k+1}, \dots, x_{d(G)} \in G$ such that $\< Nx_1, \dots, Nx_{d(G)} \> = G/N$. Since $\< N, x_{d(G)} \>$ is cyclic, fix an element $y \in G$ such that $\< N, x_{d(G)} \> = \< y \>$. Now $G = \< x_1 \dots, x_{d(G)-1}, y \>$, so $G$ is $k$-flexible.
\end{proof}

The following lemma indicates that the case $k=1$ does not follow the pattern described by Lemma~\ref{lem:cycliciser_quotient}.

\begin{lemma} \label{lem:1-flexible}
Assume that $G$ is $2$-flexible. Then $G$ is $1$-flexible if and only if $\Cyc(G)=1$.
\end{lemma}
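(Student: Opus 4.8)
The plan is to prove the two implications separately. It is worth noting at the outset that only the reverse implication genuinely uses $2$-flexibility; the forward implication follows from the definition of the cycliciser together with the standing assumption $d(G) \geq 2$.

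For the forward direction I would argue the contrapositive: if $\Cyc(G) \neq 1$ then $G$ is not $1$-flexible. Fix a nontrivial $c \in \Cyc(G)$ and suppose, for a contradiction, that $c$ lies in a generating set $\{c, x_2, \dots, x_d\}$ of minimal size $d = d(G)$. Since $c \in \Cyc(G)$, the subgroup $\langle c, x_2\rangle$ is cyclic, say $\langle c, x_2\rangle = \langle h\rangle$. Then $G = \langle h, x_3, \dots, x_d\rangle$ is generated by $d-1$ elements, contradicting $d(G) = d$. Hence no minimal generating set contains $c$, so this nontrivial $c$ witnesses that $G$ is not $1$-flexible.

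For the reverse direction, assume $\Cyc(G) = 1$ and that $G$ is $2$-flexible, and show $G$ is $1$-flexible directly. Take an arbitrary $x_1 \neq 1$. Since $\Cyc(G) = 1$ we have $x_1 \notin \Cyc(G)$, so by the definition of the cycliciser there exists $x_2 \in G$ with $\langle x_1, x_2\rangle$ not cyclic. This subgroup is generated by two elements but is not cyclic, so $d(\langle x_1, x_2\rangle) = 2$. Now $2$-flexibility applies and yields $x_3, \dots, x_{d(G)} \in G$ with $\langle x_1, x_2, \dots, x_{d(G)}\rangle = G$. This places $x_1$ inside a generating set of minimal size, and as $x_1$ was an arbitrary nontrivial element, $G$ is $1$-flexible.

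I do not anticipate a serious obstacle; the lemma is short. The two points to keep straight are that a nontrivial cycliciser element always collapses a minimal generating set by one element (which is where $d(G) \geq 2$ is used), and that a non-cyclic $2$-generated subgroup has minimal generator number exactly $2$, this last fact being precisely the hypothesis needed to invoke $2$-flexibility.
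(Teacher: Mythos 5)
Your proposal is correct and follows essentially the same route as the paper: the reverse direction uses $\Cyc(G)=1$ to find a noncyclic $\<x_1,x_2\>$ and then applies $2$-flexibility, while the forward direction shows a nontrivial element of $\Cyc(G)$ would collapse a minimal generating set to $d(G)-1$ elements. Your observation that only the reverse implication uses $2$-flexibility (and that the forward one relies on the standing assumption $d(G)\geq 2$) is accurate and matches the structure of the paper's argument.
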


\begin{proof}
First assume that $\Cyc(G)$ is trivial. Let $1 \neq x_1 \in G$. As $\Cyc(G)$ is trivial, $x_1 \not\in \Cyc(G)$, so there exists $x_2 \in G$ such that $\<x_1,x_2\>$ is noncyclic. Since $G$ is $2$-flexible, there exist elements $x_3, \dots, x_{d(G)} \in G$ such that $\< x_1, \dots, x_{d(G)} \> = G$. Therefore, $G$ is $1$-flexible.

Now assume that $\Cyc(G)$ is nontrivial. Suppose that $G$ is $1$-flexible. Let $1 \neq x_1 \in \Cyc(G)$. Then there exist elements $x_2, \dots, x_{d(G)} \in G$ such that $\< x_1, \dots, x_{d(G)} \> = G$. Since $x_1 \in \Cyc(G)$, there exists $y \in G$ such that $\<x_1,x_2\> = \<y\>$, so $G = \<x_1, \dots, x_{d(G)} \> = \< y, x_3, \dots, x_{d(G)} \>$, which is impossible as $G$ has no generating set of size $d(G)-1$. Therefore, $G$ is not $1$-flexible.
\end{proof}

\begin{corollary} \label{cor:1-flexible}
Assume that $G$ is $2$-flexible. Then $G/\Cyc(G)$ is $1$-flexible.
\end{corollary}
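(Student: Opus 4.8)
The plan is to apply Lemma~\ref{lem:1-flexible} to the quotient $G/\Cyc(G)$ rather than to $G$ itself. Writing $N = \Cyc(G)$, that lemma (used with $G/N$ in place of $G$) will yield that $G/N$ is $1$-flexible as soon as we know two things: first, that $G/N$ is itself $2$-flexible, so that the hypothesis of Lemma~\ref{lem:1-flexible} is met; and second, that $\Cyc(G/N)$ is trivial, which is exactly the condition that lemma converts into $1$-flexibility. So the whole corollary reduces to establishing these two facts.

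For the first, I would transfer $2$-flexibility from $G$ to $G/N$. By Lemma~\ref{lem:d_quotient} we have $d(G/N) = d(G) \geq 2$, and $G$ is $2$-flexible by hypothesis; since $2 \leq d(G)$, Lemma~\ref{lem:quotient} (taken with $k = 2$) then gives that $G/N$ is $2$-flexible. I would deliberately invoke Lemma~\ref{lem:quotient} here rather than Lemma~\ref{lem:cycliciser_quotient}: the latter requires $2 \leq k < d(G)$ and so does not cover the boundary case $d(G) = 2$, whereas Lemma~\ref{lem:quotient} applies uniformly for all $d(G) \geq 2$.

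For the second fact there is essentially nothing to prove: Lemma~\ref{lem:cycliciser} says precisely that $N = \Cyc(G)$ is a normal subgroup with $\Cyc(G/N)$ trivial. Combining the two facts, Lemma~\ref{lem:1-flexible} applied to $G/N$ shows that $G/N$ is $1$-flexible, as required. I do not expect any genuine obstacle here; the only points needing care are that $G/N$ meets the standing hypothesis $d(\,\cdot\,) \geq 2$ of the section (guaranteed by Lemma~\ref{lem:d_quotient}) and that the $2$-flexibility of $G/N$, which is the hypothesis of Lemma~\ref{lem:1-flexible}, has genuinely been secured before that lemma is invoked.
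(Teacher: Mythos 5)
Your proof is correct and takes essentially the same route as the paper: Lemma~\ref{lem:cycliciser} gives $\Cyc(G/\Cyc(G))=1$, and Lemma~\ref{lem:1-flexible} applied to $G/\Cyc(G)$ then yields $1$-flexibility. In fact you are slightly more careful than the paper's one-line proof, which leaves the $2$-flexibility of $G/\Cyc(G)$ (secured exactly as you do, via Lemmas~\ref{lem:d_quotient} and~\ref{lem:quotient}) implicit, spelling that step out only later in the proof of Theorem~\ref{thm:flexible}.
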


\begin{proof}
By Lemma~\ref{lem:cycliciser}, $\Cyc(G/\Cyc(G))=1$, so by Lemma~\ref{lem:1-flexible}, $G/\Cyc(G)$ is $1$-flexible.
\end{proof}

We first handle the case where $d(G)=2$.

\begin{lemma} \label{lem:2-flexible_2}
Assume that $d(G) = 2$. Then $G$ is $2$-flexible if and only if $G = p^2$, $G = Q_8$ or $G$ is presented $\< a, b \mid a^p = b^{q^m} = b^{-1}aba^{-r} \>$ for $p \neq q$ primes and $r>1$ satisfying $r \mid q-1$ and $p \mid r^q-1$.
\end{lemma}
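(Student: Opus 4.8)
The plan is to reduce $2$-flexibility to a subgroup condition and then classify the resulting groups. Since $d(G) = 2$, the definition of $2$-flexibility asks for no further generators: $G$ is $2$-flexible exactly when every $x_1, x_2 \in G$ with $d(\langle x_1, x_2\rangle) = 2$ already generate $G$. As $\langle x_1, x_2\rangle$ is $2$-generated, the condition $d(\langle x_1,x_2\rangle) = 2$ just says that $\langle x_1,x_2\rangle$ is noncyclic. I would therefore first prove that $G$ is $2$-flexible if and only if every proper subgroup of $G$ is cyclic. The backward implication is immediate. For the forward implication, suppose some proper $H < G$ is noncyclic; since $\Cyc(H)$ is cyclic (by \cite[Lemma~32]{ref:AACNS17}) we have $H \neq \Cyc(H)$, so there is $h \in H \setminus \Cyc(H)$ and hence $h' \in H$ with $\langle h, h'\rangle$ noncyclic. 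Then $\langle h, h'\rangle$ is a proper noncyclic $2$-generated subgroup, contradicting $2$-flexibility.

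It remains to determine the finite groups all of whose proper subgroups are cyclic, the \emph{minimal non-cyclic} groups; note each is noncyclic but $2$-generated, so has $d(G) = 2$ as required. I would argue by cases. If $G$ is a $p$-group, then every maximal subgroup is cyclic: in the abelian case this forces $G = p^2$, and in the nonabelian case $G = Q_8$, the unique nonabelian $p$-group whose proper subgroups are all cyclic (e.g. via the classification of $p$-groups with a cyclic maximal subgroup).

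If $G$ is not a $p$-group, then each Sylow subgroup is proper, hence cyclic, so $G$ is a $Z$-group; by the H\"older--Burnside--Zassenhaus theorem $G = \langle a\rangle \rtimes \langle b\rangle$ is metacyclic with $\langle a\rangle = G'$ and $G/G'$ cyclic. As $G$ is noncyclic it is nonabelian, so $b$ acts nontrivially on $\langle a\rangle$. Applying minimality to the subgroups $\langle a^\ell, b\rangle$ (for primes $\ell \mid |a|$) forces $|a|$ to be a prime $p$, and applying it to the subgroups $\langle a, b^e\rangle$ (for prime divisors $e \mid |b|$) forces $|b| = q^m$ to be a prime power with $p \neq q$ and the action of $b$ on $\langle a\rangle$ of order exactly $q$. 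Writing $b^{-1}ab = a^r$ yields the displayed presentation, and the requirement that $a \mapsto a^r$ be an automorphism of order exactly $q$ translates into the number-theoretic side-conditions on $p$, $q$, $r$ in the statement (recording $p \mid r^q - 1$ with nontrivial action, and fixing a canonical $r$).

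For the converse one checks directly that each listed group is minimal non-cyclic: for $p^2$ and $Q_8$ this is clear, while in the metacyclic case every proper subgroup either meets $\langle a\rangle$ trivially, hence embeds in the cyclic quotient $G/\langle a\rangle$, or contains $\langle a\rangle$, hence equals $\langle a\rangle \times \langle b^{q^i}\rangle$ for some $i \geq 1$ and is cyclic because $b^q$ centralizes $a$. I expect the main obstacle to be the non-$p$-group case: setting up the $Z$-group structure and then extracting the precise parameters from minimality, and in particular matching the order-$q$ action to the exact arithmetic form of the presentation, is somewhat delicate though conceptually routine. A shorter alternative is to quote the classical Miller--Moreno classification of minimal non-cyclic groups, after which the lemma is immediate from the reduction in the first paragraph.
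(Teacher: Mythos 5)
Your proposal is correct and takes essentially the same approach as the paper: the paper's entire proof consists of the observation that, since $d(G)=2$, $G$ is $2$-flexible if and only if every proper subgroup of $G$ is cyclic, followed by a citation of the main theorem of Miller--Moreno \cite{ref:MillerMoreno1903} --- precisely your reduction plus your ``shorter alternative.'' Your cycliciser argument makes rigorous what the paper leaves as an observation, and your direct case analysis ($p$-group versus $Z$-group) is a sound, self-contained substitute for the citation, though it is not needed once \cite{ref:MillerMoreno1903} is invoked.
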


\begin{proof}
Observe that $G$ is $2$-flexible if and only if every proper subgroup of $G$ is cyclic. The result now follows from the main theorem of \cite{ref:MillerMoreno1903}.
\end{proof}

\begin{lemma}
Assume that $d(G) = 2$. Then $G$ is $1$-flexible and $2$-flexible if and only if $G = p^2$ or $G = p{:}\<g\>$ where $g \in \GL_1(p)$ has prime order.
\end{lemma}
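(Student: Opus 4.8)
The plan is to combine the classification in Lemma~\ref{lem:2-flexible_2} with the cycliciser criterion in Lemma~\ref{lem:1-flexible}. By Lemma~\ref{lem:2-flexible_2}, a group $G$ with $d(G)=2$ is $2$-flexible precisely when $G$ is $p^2$, $Q_8$, or a metacyclic group $\<a,b \mid a^p = b^{q^m} = 1,\ b^{-1}ab = a^r\>$ from the Miller--Moreno list; and by Lemma~\ref{lem:1-flexible} such a $G$ is moreover $1$-flexible if and only if $\Cyc(G)=1$. So the whole problem reduces to deciding, in each of the three families, whether the cycliciser is trivial. I would first record the elementary fact that $\Cyc(G) \leq Z(G)$: any $c \in \Cyc(G)$ lies in a cyclic, hence abelian, subgroup together with each $g \in G$, so $c$ is central.

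With this in hand, two of the three cases are immediate. For $G = p^2$, given any $1 \neq c$ I may choose $g \notin \<c\>$, whence $\<c,g\> = G$ is noncyclic and $c \notin \Cyc(G)$; thus $\Cyc(p^2)=1$ and $p^2$ is $1$-flexible. For $G = Q_8$, the central involution $z$ satisfies $z \in \<g\>$ for every $g \neq 1$, so $\<z,g\> = \<g\>$ is cyclic and $z \in \Cyc(Q_8)$; thus $\Cyc(Q_8) \neq 1$ and $Q_8$ is not $1$-flexible, as already observed in Remark~\ref{rem:12-flexible}.

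For the metacyclic family I would first pin down the centre. The hypotheses force $r$ to have order $q$ modulo $p$, so the action of $b$ on $\<a\>$ has order $q$, whence $b^q$ centralises $a$ and $Z(G) = \<b^q\>$, a cyclic $q$-group of order $q^{m-1}$. If $m=1$ then $Z(G)=1$, so $\Cyc(G) \leq Z(G) = 1$ and $G$ is $1$-flexible; moreover this group is exactly $p{:}\<g\>$ with $g$ the scalar $r \in \GL_1(p)$ of prime order $q$, which is the asserted form. It remains to show that every group in this family with $m \geq 2$ fails to be $1$-flexible, equivalently that $\Cyc(G) \neq 1$.

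The crux, and the step I expect to require the real work, is to verify that the central element $c = b^{q^{m-1}}$ of order $q$ lies in $\Cyc(G)$ when $m \geq 2$; that is, that $\<c,g\>$ is cyclic for every $g = a^i b^j$. The key computation uses $(a^ib^j)^n = a^{\,i\sum_{k=0}^{n-1} r^{-jk}} b^{jn}$: whenever $q \nmid j$, the exponent sum vanishes modulo $p$ over each block of $q$ terms (since $r^{-j}$ has order $q$), so $g$ has order exactly $q^m$ and $g^{q^{m-1}}$ generates $\<c\>$, giving $c \in \<g\>$ and $\<c,g\> = \<g\>$ cyclic. In the remaining cases $q \mid j$, the component $b^j$ is central of $q$-power order, so $\<c,g\>$ is generated by commuting elements of coprime orders ($p$ and a power of $q$) and is again cyclic, with $c$ either lying in $\<g\>$ or supplying the coprime $q$-part. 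Hence $c \in \Cyc(G)$ and $\Cyc(G) \neq 1$, so $G$ is not $1$-flexible for $m \geq 2$. Collecting the four conclusions leaves exactly $p^2$ and $p{:}\<g\>$ with $g \in \GL_1(p)$ of prime order, as claimed.
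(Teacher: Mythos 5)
Your proposal is correct, and it follows the paper's overall skeleton (reduce via Lemma~\ref{lem:2-flexible_2} to the three families $p^2$, $Q_8$, Miller--Moreno), but it decides $1$-flexibility by a genuinely different mechanism. The paper handles the positive direction by citing Remark~\ref{rem:1-flexible} (i.e.\ Theorem~\ref{thm:1-flexible}) to see that $p^2$ and $p{:}\<g\>$ are $1$-flexible, and handles the exclusions ($Q_8$ and the case $m>1$) with a one-line observation: if $m>1$ then $b^q$ is a nontrivial central element of a nonabelian group, and a central element together with any second element generates an abelian subgroup, so $b^q$ lies in no generating pair. You instead run everything uniformly through the cycliciser criterion of Lemma~\ref{lem:1-flexible}, computing $\Cyc(G)$ in each family: trivial for $p^2$ and for $m=1$ (via your correct observation $\Cyc(G)\leq Z(G)$), and nontrivial for $Q_8$ and for $m\geq 2$, where you verify by the power formula $(a^ib^j)^n = a^{\,i\sum_{k} r^{-jk}}b^{jn}$ that $b^{q^{m-1}}\in\Cyc(G)$. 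Both routes are sound; the trade-off is that your argument is more self-contained (the positive direction uses only the elementary Lemma~\ref{lem:1-flexible} rather than Theorem~\ref{thm:1-flexible}, whose $d(G)=2$ case rests on the deep Burness--Guralnick--Harper theorem) and yields the stronger conclusion that $b^{q^{m-1}}$ cycliciser-centralises everything, whereas the paper's central-element argument reaches ``not $1$-flexible'' immediately, with no order computations at all. Note also that for the exclusions you prove more than you need: to contradict $1$-flexibility it suffices that the single element $b^q$ lies in no generating pair, which is exactly the paper's shortcut.
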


\begin{proof}
By Remark~\ref{rem:1-flexible} and Lemma~\ref{lem:2-flexible_2}, the groups in the statement are $1$-flexible and $2$-flexible. For the converse, consulting Lemma~\ref{lem:2-flexible_2} and noting that $p^2$ is $1$-flexible and $Q_8$ is not $1$-flexible, it remains to show that if $G = \< a, b \mid a^p = b^{q^m} = b^{-1}aba^{-r} \>$ with $p, q$ distinct primes and $r > 1$ satisfying $r \mid q-1$ and $p \mid r^q-1$, then $m=1$. To do this, note that if $m > 1$, then $1 \neq b^q \in Z(G)$ but $G$ is nonabelian, so $b^q$ is not contained in a generating pair of $G$.
\end{proof}

We now turn to the case where $d(G) \geq 3$. We begin with two examples.

\begin{example} \label{ex:vectorspace}
Let $p$ be prime. Let $G$ be the elementary abelian group $p^r$, so $d(G) = r$. Let $1 \leq k \leq r$. As $p^r$ is a vector space, given $x_1,\dots,x_k \in G$, if $d(\<x_1,\dots,x_k\>) = k$, then $x_1,\dots,x_k$ are linearly independent, so can be extended to a basis $x_1,\dots,x_r$ for $G$. Thus, $G$ is $k$-flexible.
\end{example}

\begin{example} \label{ex:flexible}
Let $r \geq 2$ and $p$ be prime. Let $G = p^r{:}\<g\>$ for a nontrivial scalar $g \in \GL_r(p)$. Note that $d(G) = r+1$ (see \cite[Theorem~2.7]{ref:DallaVoltaLucchini98}). Let $1 \leq k \leq r$. We claim that $G$ is $k$-flexible.

Let $x_1,\dots,x_k \in G$ such that $d(\<x_1,\dots,x_k\>) = k$. Now observe that there are exactly $|p^r|$ distinct $G$-conjugates of $\<g\>$ and that any two of these conjugates intersect trivially. Hence, fix a conjugate $H$ of $\<g\>$ such that $x_1,\dots,x_k\ \not\in H$. Write $x_i = n_ih_i$ where $n_i \in p^r$ and $h_i \in H$; note that $n_i \neq 1$ as $x_i \not\in H$. Fix $x_{k+1}, \dots, x_r \in p^r$ such that $\<n_1,\dots,n_k,x_{k+1},\dots,x_r\> = p^r$ and let $x_{r+1}$ generate $H$. Let $X = \<x_1,\dots,x_{r+1}\>$. Then $x_{r+1} \in X$, so $H \leq X$, and for each $1 \leq i \leq k$, we have $h_i \in \<x_{r+1}\> \leq X$ and hence $n_i = x_ih_i^{-1} \in X$, so $p^r = \< n_1, \dots, n_k,x_{k+1}, \dots, x_r\> \leq X$, from which we can conclude that $G = p^r{:}H = X = \< x_1, \dots, x_{r+1} \>$. 
\end{example}

\begin{corollary}\label{cor:flexible}
Assume $d(G) \geq 3$ and $G/\Cyc(G)$ is $p^r{:}\<g\>$ where $p$ is prime and $g \in \GL_r(p)$ is a scalar. Then $G$ is $k$-flexible for all $2 \leq k < d(G)$.
\end{corollary}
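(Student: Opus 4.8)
The plan is to reduce the statement to the two worked examples by quotienting out the cycliciser. Write $N = \Cyc(G)$. Since $d(G) \geq 2$, Lemma~\ref{lem:d_quotient} gives $d(G/N) = d(G) \geq 3$, and Lemma~\ref{lem:cycliciser_quotient} tells us that for each $2 \leq k < d(G)$ the group $G$ is $k$-flexible if and only if $G/N$ is $k$-flexible. Hence it suffices to show that $G/N = p^r{:}\<g\>$ is $k$-flexible for all $2 \leq k < d(G) = d(G/N)$.

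I would then split into two cases according to whether the scalar $g$ is trivial. If $g = 1$, then $G/N$ is the elementary abelian group $p^r$, so $r = d(G/N) = d(G) \geq 3$, and Example~\ref{ex:vectorspace} shows that $p^r$ is $k$-flexible for all $1 \leq k \leq r$; in particular it is $k$-flexible for all $2 \leq k < r = d(G)$. If instead $g \neq 1$, then I first note that $r \geq 2$: were $r = 1$, the Frobenius group $p{:}\<g\>$ would satisfy $d(G/N) = 2$, contradicting $d(G/N) \geq 3$. With $r \geq 2$ and $g$ nontrivial, Example~\ref{ex:flexible} applies, giving $d(G/N) = r + 1$ and showing that $G/N$ is $k$-flexible for all $1 \leq k \leq r$; since $d(G) = d(G/N) = r + 1$, this covers precisely the range $2 \leq k \leq r = d(G) - 1$, as required.

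There is no serious obstacle here: the content is entirely carried by Lemmas~\ref{lem:d_quotient} and~\ref{lem:cycliciser_quotient} together with Examples~\ref{ex:vectorspace} and~\ref{ex:flexible}. The only points needing care are bookkeeping ones—confirming via Lemma~\ref{lem:d_quotient} that $d(G/N) = d(G)$ so that the ranges of $k$ match up, and ruling out $r = 1$ in the nontrivial case so that Example~\ref{ex:flexible} is genuinely applicable. Combining the two cases, $G/N$ is $k$-flexible for all $2 \leq k < d(G)$, and transporting this back through Lemma~\ref{lem:cycliciser_quotient} gives the claim for $G$.
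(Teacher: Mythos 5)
Your proposal is correct and follows exactly the paper's route: the paper's proof is precisely the combination of Lemma~\ref{lem:cycliciser_quotient} with Examples~\ref{ex:vectorspace} and~\ref{ex:flexible}, which you have simply spelled out in full. The additional bookkeeping you supply (invoking Lemma~\ref{lem:d_quotient} to match the ranges of $k$, splitting on whether $g$ is trivial, and ruling out $r=1$ when $g \neq 1$) is exactly the verification the paper leaves implicit.
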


\begin{proof}
This is a consequence of Lemma~\ref{lem:cycliciser_quotient} and Examples~\ref{ex:vectorspace} and~\ref{ex:flexible}.
\end{proof}

We now show Examples~\ref{ex:vectorspace} and~\ref{ex:flexible} to be, in essence, the only examples.

\begin{lemma} \label{lem:2-flexible}
Assume $d(G) \geq 3$ and $G$ is $2$-flexible. Then every minimal normal subgroup of $G$ is cyclic.
\end{lemma}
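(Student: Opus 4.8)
The plan is to argue by contradiction. Suppose $M$ is a minimal normal subgroup of $G$ that is not cyclic. Then $M$ is either elementary abelian of rank at least $2$ or a nontrivial direct power of a nonabelian simple group, and in either case $M$ contains two elements $x_1,x_2$ with $\<x_1,x_2\>$ noncyclic: two linearly independent vectors in the first case, two noncommuting elements in the second. Thus $d(\<x_1,x_2\>)=2$, so $2$-flexibility provides $x_3,\dots,x_{d(G)}$ with $\<x_1,\dots,x_{d(G)}\>=G$. Passing to $G/M$ annihilates $x_1$ and $x_2$, so $G/M=\<Mx_3,\dots,Mx_{d(G)}\>$ and hence $d(G/M)\le d(G)-2$. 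Here $d(G)\ge 3$ is used to guarantee $M\neq G$: a group with no proper nontrivial normal subgroup is simple (possibly of prime order) and so needs at most $2$ generators. Thus $G/M$ is a proper nontrivial quotient.

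The contradiction I am after comes from the complementary inequality $d(G)\le d(G/M)+1$, which together with the above gives $d(G)\le(d(G)-2)+1=d(G)-1$. When $M$ is abelian this inequality is the familiar Gaschütz-style argument: lift a generating set of $G/M$ of size $d(G/M)$ to obtain $K\le G$ with $KM=G$; if $K\neq G$, then minimality of $M$ forces $K$ to be a complement, and as $M$ is an irreducible $K$-module the normal closure in $\<K,m\>$ of any $1\neq m\in M$ is all of $M$, whence $\<K,m\>=G$ and $d(G)\le d(G/M)+1$.

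The main obstacle is establishing the same bound $d(G)\le d(G/M)+1$ when $M$ is nonabelian. Here the one-element completion above fails, because a single element of $M$ need not generate $M$ as a $K$-group; the saving instead comes from absorbing an $M$-component into the lifts of a generating set of $G/M$. This is precisely the behaviour quantified by the generation theory of \cite{ref:DallaVoltaLucchini98}: passing from a group to its quotient by a single minimal normal subgroup lowers the minimal number of generators by at most one. I would therefore either cite this directly, or reduce to it. A noncyclic $M$ meets the cyclic central subgroup $\Cyc(G)$ trivially (otherwise $M\cap\Cyc(G)=M$ would be cyclic), so its image is a minimal normal subgroup of $G/\Cyc(G)$, which by Corollary~\ref{cor:1-flexible} and Lemma~\ref{lem:cycliciser_quotient} is both $1$-flexible and $2$-flexible with $d(G/\Cyc(G))=d(G)\ge 3$; Remark~\ref{rem:1-flexible}(iii) then presents $G/\Cyc(G)$ as a crown-based power, for which quotienting by the corresponding socle factor yields the crown-based power with one fewer layer and makes the inequality transparent. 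I expect this generation bound for nonabelian $M$ to be the crux of the argument.
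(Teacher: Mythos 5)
Your proposal follows essentially the same route as the paper: produce $x_1,x_2 \in M$ with $\<x_1,x_2\>$ noncyclic, extend to a generating $d(G)$-tuple by $2$-flexibility, pass to $G/M$ to get $d(G/M) \le d(G)-2$, and contradict this with the fact that quotienting by a minimal normal subgroup lowers the minimal number of generators by at most one. (The paper obtains the noncyclic pair by citing \cite[Lemma~32]{ref:AACNS17}; your explicit construction from characteristic simplicity is an equally good substitute.) The one thing you must fix is the source of the final inequality, which you rightly identify as the crux: it is the main theorem of \cite{ref:Lucchini95} --- a paper titled \emph{Generators and minimal normal subgroups}, already in the bibliography --- whose sole exception is $G$ nonabelian simple, excluded here since $d(G) \ge 3$; this is how the paper packages your separate observation that $M \neq G$. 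It is \emph{not} a stated theorem of \cite{ref:DallaVoltaLucchini98}, which concerns the structure of groups all of whose proper quotients need fewer generators, so citing that paper ``directly'' for the inequality would not go through. With the corrected citation your proof closes in both cases at once, exactly as the paper's does, and your Gasch\"{u}tz-style argument becomes an unnecessary but correct and pleasantly elementary self-contained treatment of the abelian case. Your fallback reduction for nonabelian $M$ (through $G/\Cyc(G)$, Corollary~\ref{cor:1-flexible}, Lemma~\ref{lem:cycliciser_quotient} and the crown-based-power presentation of Remark~\ref{rem:1-flexible}(iii)) is plausible but incomplete as sketched: you would still need to show that the image of $M$ is a minimal normal subgroup of $G/\Cyc(G)$ and is one of the coordinate copies of the monolith, and that the function $f$ of \cite[Theorem~2.7]{ref:DallaVoltaLucchini98} is strictly increasing so that removing one layer drops $d$ by at most one. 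The direct appeal to \cite{ref:Lucchini95} is much the cleaner path.
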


\begin{proof}
Let $N$ be a minimal normal subgroup of $G$. Towards a contradiction, suppose that $N$ is noncyclic. By \cite[Lemma~32]{ref:AACNS17}, there exist elements $x_1,x_2 \in N$ such that $\<x_1,x_2\>$ is noncyclic. By assumption, there exists $x_3, \dots, x_{d(G)} \in G$ such that $\<x_1,\dots,x_{d(G)}\> = G$, so $\<Nx_1, \dots, Nx_{d(G)}\> = \< Nx_3, \dots, Nx_{d(G)}\> = G/N$, which implies that $d(G/N) \leq d(G) - 2$. Now the main theorem of \cite{ref:Lucchini95} implies that $G$ is a nonabelian simple group, which is a contradiction since $d(G) \geq 3$. We conclude that $N$ is cyclic.
\end{proof}

\begin{lemma}\label{lem:12-flexible}
Assume $d(G) \geq 3$ and $G$ is both $1$-flexible and $2$-flexible. Then $G = p^r{:}\<g\>$ for a prime $p$, a scalar $g \in \GL_r(p)$ and $r = d(G) - d(\<g\>)$.
\end{lemma}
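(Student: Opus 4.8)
The plan is to feed the structural description of $1$-flexible groups into the constraint that $2$-flexibility places on minimal normal subgroups. Since $G$ is $1$-flexible with $d(G) \geq 3$, Remark~\ref{rem:1-flexible}(iii) presents $G$ as the crown-based power
\[
G = \{ (g_1, \dots, g_r) \in L^r \mid Mg_1 = \dots = Mg_r \}
\]
of a primitive monolithic group $L$ with monolith $M$. The first step is to locate minimal normal subgroups of $G$ inside this description. For each $1 \leq i \leq r$, the coordinate copy $M_i = \{ (1, \dots, 1, m, 1, \dots, 1) \mid m \in M \}$ is normal in $G$ because $M \leqn L$, and since the $i$-th projection $G \to L$ is onto (the diagonal element $(\ell, \dots, \ell)$ lies in $G$ for every $\ell \in L$), the conjugation action of $G$ on $M_i$ is precisely the action of $L$ on its monolith $M$. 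Hence $M_i$ is a minimal normal subgroup of $G$ isomorphic to $M$. As $G$ is $2$-flexible with $d(G) \geq 3$, Lemma~\ref{lem:2-flexible} forces $M_i$, and therefore $M$, to be cyclic, so $M \cong C_p$ for a prime $p$.

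Next I would unwind the crown-based power using $M \cong C_p$. Since $L$ is primitive with abelian monolith, it is of affine type: $C_L(M) = M$ and $L = M{:}H$ splits, with $H \cong L/M$ embedding in $\mathrm{Aut}(C_p) \cong C_{p-1}$, so $H = \langle h \rangle$ is cyclic and acts on $M$ as multiplication by some $\lambda \in \mathbb{F}_p^\times$. Put $V = M_1 \times \dots \times M_r \cong C_p^r$, a normal subgroup of $G$, and let $\Delta = \{ (h, \dots, h) \mid h \in H \}$ be the diagonal copy of $H$. Then $V \cap \Delta = 1$ and $|V|\,|\Delta| = p^r|H| = |G|$, using $|G| = |L|^r / |L/M|^{r-1}$, so $G = V{:}\Delta$; moreover $\Delta$ acts on $V$ coordinatewise as $H$ acts on each copy of $M$, that is, as the scalar $\lambda I$. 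Identifying $V$ with $\mathbb{F}_p^r$, the generator $g$ of $\Delta$ is a scalar in $\GL_r(p)$ and $G = p^r{:}\langle g \rangle$. Finally, if $H = 1$ then $G = p^r$, so $d(G) = r$ by Example~\ref{ex:vectorspace} and $d(\langle g \rangle) = 0$; if $H \neq 1$ then $\langle g \rangle$ is nontrivial, so $d(\langle g \rangle) = 1$ and $d(G) = r + 1$ by Example~\ref{ex:flexible} (here $r \geq 2$ since $d(G) \geq 3$). In either case $r = d(G) - d(\langle g \rangle)$, as required.

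I expect the crux to be the passage through the crown-based power: verifying that the coordinate copies $M_i$ really are minimal normal subgroups of $G$ (so that Lemma~\ref{lem:2-flexible} applies) and, once $M \cong C_p$ is known, extracting both the split affine structure $L = M{:}H$ with $H$ cyclic and the fact that the diagonal complement $\Delta$ acts on $V$ as a single scalar matrix. Both points rely on the surjectivity of the coordinate projections $G \to L$ and on the diagonal nature of the action; with these established, the semidirect decomposition $G = V{:}\Delta$ and the generator count via Examples~\ref{ex:vectorspace} and~\ref{ex:flexible} are routine.
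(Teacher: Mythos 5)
Your proof is correct and takes essentially the same route as the paper's: extract the crown-based power structure over a primitive monolithic group $L$ from $1$-flexibility via \cite[Theorem~1.4]{ref:DallaVoltaLucchini98}, use Lemma~\ref{lem:2-flexible} to force the monolith $M$ to be cyclic of prime order, deduce $L = p{:}\<\lambda\>$ and hence $G = p^r{:}\<g\>$ with $g$ scalar, and obtain $r = d(G) - d(\<g\>)$ from \cite[Theorem~2.7]{ref:DallaVoltaLucchini98}. The only difference is one of detail: you explicitly verify what the paper leaves implicit, namely that the coordinate copies $M_i$ are minimal normal subgroups of $G$ (so that Lemma~\ref{lem:2-flexible} really applies to $M$) and that the crown-based power with $L = p{:}\<\lambda\>$ unwinds to $V{:}\Delta = p^r{:}\<g\>$ with $g$ acting as a scalar.
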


\begin{proof}
As $G$ is $1$-flexible, as discussed in Remark~\ref{rem:1-flexible}, we know $d(G/N) < d(G)$ for all $1 \leq N \leqn G$, so by \cite[Theorem~1.4]{ref:DallaVoltaLucchini98}, 
\[
G = \{ (g_1,\dots,g_r) \in L^r \mid Mg_1 = \dots = Mg_r \}
\]
with $L$ a primitive monolithic group with monolith $M$ and $r = f(d(G)-1)$ where $f$ is the function given in \cite[Theorem~2.7]{ref:DallaVoltaLucchini98}. Since $G$ is $2$-flexible, by Lemma~\ref{lem:12-flexible}, every minimal normal subgroup of $G$ is cyclic, so $M = p$  and we deduce that $L = p{:}\<\lambda\>$ where $\lambda \in \GL_1(p)$, so $G = p^r{:}\<g\>$ where $g \in \GL_r(p)$ is a scalar. It remains to note that either $g=1$ and $r=d(G)$ or otherwise $g \neq 1$ and \cite[Theorem~2.7]{ref:DallaVoltaLucchini98} implies that $r = d(G)-1$.  
\end{proof}

\begin{theorem} \label{thm:flexible}
Let $G$ be a finite group with $d(G) \geq 3$. Then the following are equivalent:
\begin{itemize}
\item[{\rm (i)}]   $G$ is $2$-flexible
\item[{\rm (ii)}]  $G$ is $k$-flexible for all $2 \leq k < d(G)$
\item[{\rm (iii)}] $G/\Cyc(G) = p^r{:}\<g\>$ for prime $p$, scalar $g \in \GL_r(p)$ and $r = d(G) {-} d(\<g\>)$.
\end{itemize}
\end{theorem}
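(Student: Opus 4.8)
The plan is to establish the three-way equivalence by proving the cycle (ii) $\Rightarrow$ (i) $\Rightarrow$ (iii) $\Rightarrow$ (ii), and two of the three links are essentially free. The implication (ii) $\Rightarrow$ (i) is immediate: since $d(G) \geq 3$, the value $k=2$ lies in the range $2 \leq k < d(G)$, so $k$-flexibility for all such $k$ in particular yields $2$-flexibility. The implication (iii) $\Rightarrow$ (ii) is exactly Corollary~\ref{cor:flexible}, which already bundles Examples~\ref{ex:vectorspace} and~\ref{ex:flexible} together with the cycliciser reduction of Lemma~\ref{lem:cycliciser_quotient}. Thus the real content of the theorem is concentrated in the implication (i) $\Rightarrow$ (iii).

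For (i) $\Rightarrow$ (iii), the guiding idea is that one cannot feed $G$ directly into the structural Lemma~\ref{lem:12-flexible}, because a $2$-flexible group need not be $1$-flexible (witness $Q_8$ in Remark~\ref{rem:12-flexible}). The correct move is therefore to pass to the quotient $H := G/\Cyc(G)$ and apply Lemma~\ref{lem:12-flexible} there. I would first invoke Lemma~\ref{lem:d_quotient} to get $d(H) = d(G) \geq 3$, so that $H$ meets the dimension hypothesis. Then, since $G$ is $2$-flexible and $2 \leq 2 < d(G)$, Lemma~\ref{lem:cycliciser_quotient} transfers $2$-flexibility to $H$, while Corollary~\ref{cor:1-flexible} gives that $H$ is $1$-flexible. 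With $H$ both $1$-flexible and $2$-flexible and $d(H) \geq 3$, Lemma~\ref{lem:12-flexible} delivers $H = p^r{:}\<g\>$ for a prime $p$, a scalar $g \in \GL_r(p)$, and $r = d(H) - d(\<g\>) = d(G) - d(\<g\>)$, which is precisely statement (iii).

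I expect no genuine obstacle in this final assembly: all of the difficulty has been front-loaded into Lemma~\ref{lem:2-flexible}, Lemma~\ref{lem:12-flexible}, and the cycliciser machinery, and the theorem is their clean synthesis. The one point deserving care is verifying that each of the three inputs to Lemma~\ref{lem:12-flexible} actually holds for $H$; the slightly delicate part is that $2$-flexibility descends to $H$ only because quotienting by $\Cyc(G)$ does not reduce the minimal number of generators (Lemma~\ref{lem:d_quotient}) and because $2$ lies \emph{strictly} below $d(G)$, as demanded by Lemma~\ref{lem:cycliciser_quotient}. Once these are checked, the cycle of implications closes and the equivalence follows.
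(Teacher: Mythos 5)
Your proposal is correct and follows essentially the same route as the paper: both reduce everything to the implication (i) $\Rightarrow$ (iii) by passing to $G/\Cyc(G)$, transferring $2$-flexibility and $1$-flexibility to that quotient via the cycliciser machinery, and then invoking Lemma~\ref{lem:12-flexible}. The only cosmetic difference is that you cite the packaged statements (Lemma~\ref{lem:cycliciser_quotient} and Corollary~\ref{cor:1-flexible}) where the paper cites the underlying Lemmas~\ref{lem:quotient}, \ref{lem:cycliciser} and~\ref{lem:1-flexible} directly, which amounts to the same argument.
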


\begin{proof}
(iii) $\implies$ (ii): This is Corollary~\ref{cor:flexible}. (ii) $\implies$ (i): This is immediate. (i) $\implies$ (iii): Lemma~\ref{lem:quotient} implies $G/\Cyc(G)$ is $2$-flexible and Lemma~\ref{lem:cycliciser} implies $\Cyc(G/\Cyc(G))$ is trivial, whence Lemma~\ref{lem:1-flexible} implies $G/\Cyc(G)$ is $1$-flexible, so Lemma~\ref{lem:12-flexible} implies $G/\Cyc(G)$ has the claimed structure. 
\end{proof}

\begin{proof}[Proof of Theorem~\ref{thm:2-flexible}]
This is an immediate consequence of Lemma~\ref{lem:1-flexible} and Theorem~\ref{thm:flexible}.
\end{proof}

\vspace{11pt}

\noindent Scott Harper \newline
School of Mathematics, University of Bristol,  BS8 1UG, UK \newline
Heilbronn Institute for Mathematical Research, Bristol, UK \newline
\texttt{scott.harper@bristol.ac.uk}

\end{document}